\begin{document}

\def\R{\mathbb{R}}
\def\N{\mathbb{N}}
\def\H{\mathcal{H}}
\def\d{\textrm{div}}
\def\v{\textbf{v}}
\def\I{\hat{I}}
\def\B{\hat{B}}
\def\x{\hat{x}}
\def\y{\hat{y}}
\def\p{\hat{\phi}}
\def\r{\hat{r}}
\def\w{\textbf{w}}
\def\u{\textbf{u}}
\def\K{\mathcal{K}}
\def\Reg{\textrm{Reg}}
\def\s{\textrm{Sing}}
\def\sgn{\textrm{sgn}}

\newtheorem{lem}{Lemma}[section]
\newtheorem{defin}{Definition}[section]
\newtheorem{conj}{Conjecture}[section]
\newtheorem{rem}{Remark}[section]
\newtheorem{cor}{Corollary}[section]
\newtheorem{thm}{Theorem}[section]
\newtheorem{prop}{Proposition}[section]
\newtheorem{definition}{Definition}[section]
\newtheorem{con}{Conjecture}[section]
\newtheorem{Main}{Main Result}

\title{Constrained optimal rearrangement problem leading to a new type obstacle problem}

\author{Hayk Mikayelyan \thanks{Mathematical Sciences, University of Nottingham Ningbo, 199 Taikang East Road, Ningbo 315100, PR China \hfill Hayk.Mikayelyan@nottingham.edu.cn  } }

\maketitle

\begin{abstract}
An optimal rearrangement problem
in a cylindrical domain $\Omega=D\times (0,1)$
is considered, under the constraint that the force function
does not depend on the $x_n$ variable of the cylindrical axis.
This leads to a new type of obstacle problem in the cylindrical domain
$$
\Delta u(x',x_n) = \chi_{\{v>0\}}(x') + \chi_{\{v=0\}}(x') [\partial_\nu u (x',0) + \partial_\nu u (x',1)]
$$
arising from minimization of the functional
$$
\int_\Omega \frac{1}{2}|\nabla u(x)|^2+\chi_{\{v>0\}}(x')u(x)\,dx,
$$
where $v(x')=\int_0^1 u(x', t) dt $, and $\partial_\nu u$ is the exterior normal derivative of $u$
at the boundary. Several existence and
regularity results are proven and it is shown
that the comparison principle does not hold for minimizers.
\end{abstract}

\thanks{obstacle problem, rearrangements, 35R35, 49J20}
\maketitle

\section{Introduction}

\subsection{Background}

One of the classical problems in rearrangement theory is the minimization of the
functional
\begin{equation}\label{Dir-functional}
\Phi(f)=\int_\Omega |\nabla u_f|^2 dx,
\end{equation}
where $u_f$ is the unique solution of the Dirichlet boundary value problem
\begin{equation}\label{main}
\begin{cases} -\Delta u_f(x) = f (x)  & \mbox{in } \Omega,
 \\
 u_f=0  & \mbox{on } \partial\Omega,  \end{cases}
\end{equation}
and $f$ belongs to the rearrangement class
$$
\mathcal{R}(f_0)=\{f\in L^2(\Omega) \,\, | \,\, \mathcal{L}^n(\{f>\alpha\})=\mathcal{L}^n(\{f_0>\alpha\})\,\,\text{for all}\,\,\alpha\in\mathbb{R} \}.
$$
Here $\Omega$ is a bounded domain in $\mathbb{R}^n$ with piece-wise smooth boundary $ \partial\Omega$, $\mathcal{L}^n$ denotes the Lebesgue measure, and $f_0\in L^2(\Omega)$ is the so-called generator
function of the rearrangement class. In this paper we will always assume that $f_0=\chi_{\Omega_0}$ for some sub-domain $\Omega_0\subset\Omega$.

This minimization problem is related to stationary heat equation 
$$
\underbrace{\partial_t u}_{=0}-\Delta u(x) = f (x)
$$
in the domain
$\Omega$, which is under the action of the external heat source modeled by the 
force function $f$. The
boundary condition $u(x)=0$ for $x\in\partial\Omega$ models the constant boundary temperature 
on the boundary of $\Omega$. Different force functions $f$ result different heat distributions $u_f$.
The minimizer $\hat{f}$ of the functional 
(\ref{Dir-functional}) is the force function from a certain rearrangement class $\mathcal{R}$, 
which is resulting the most uniformly distributed heat $u_{\hat{f}}$.

The problem and its variations, such as the $p-$harmonic case, has been studied by various authors (see \cite{B1}, \cite{B2}, \cite{BM}, \cite{EL}, \cite{Kbook}), and the results, for this particular setting, can be formulated in the following theorem.

 \begin{thm}
 There exists a unique solution $\hat{f}\in\mathcal{R}(\chi_{\Omega_0})$ of the minimization problem (\ref{Dir-functional}). For the function
 $\hat{u}=u_{\hat{f}}$ there exists a constant $\alpha>0$ such that
 \vspace{2mm}

\hspace{3cm}$\bullet$ $0< \hat{u}\leq\alpha$ in $\Omega$,
  \vspace{1mm}

\hspace{3cm}$\bullet$ $\hat{f}=\chi_{\{\hat{u}<\alpha\}}$,
   \vspace{1mm}

\hspace{3cm}$\bullet$ $\hat{u}= \alpha$ in $\{\hat{f}=0\}$.
\vspace{2mm}

\noindent Moreover, the function $U=\alpha-\hat{u}$ is the minimizer of the functional
$$
J(w)=\int_\Omega |\nabla w|^2+ 2\max(w,0)dx,
$$
among functions $w\in W^{1,2}(\Omega)$ with boundary values $\alpha$ on $\partial \Omega$, and
solves the obstacle problem equation
$$
\Delta U=\chi_{\{U>0\}}.
$$
\end{thm}

We refrain from presenting here details about the obstacle problem, which is one of the classical free boundary problems (see \cite{CaffRevisited}).


\subsection{The problem in the cylindrical domain}

In many applications heating is implemented by heating elements which are straight rods. Those are
usually placed parallel to each other in a cylindrical container, which is very natural, since in 3D it is
highly problematic and expensive to place point-wise acting heating elements all over the domain.
\vspace{3mm}

 Motivated by this we will consider a barrel-like domain
$$
\Omega=D\times (0,1)\subset \mathbb{R}^{n-1}_{x'}\times\mathbb{R}_{x_n}.
$$ and will restrict ourselves on force
functions $f(x)=f(x')$, which do not depend on the $x_n$ variable.

\begin{defin}
Let  $L^2_D(\Omega)$ be the subspace of $L^2(\Omega)$ which consists of functions constant
w.r.t. $x_n$ variable
$$
L^2_D(\Omega)=\{ g \in L^2(\Omega)\,\, | \,\, \exists h\in L^2(D)\,\,\text{such that}\,\, g(x',x_n)=h(x')
\,\,\text{a.e. in}\,\, \Omega  \}.
$$
Let now
$$
\mathcal{R}_D(f_0)=
\{f\in L^2_D(\Omega) \,\, | \,\, \mathcal{L}^{n}(\{f>\alpha\})=\mathcal{L}^n(\{f_0>\alpha\})\,\,\text{for all}\,\,\alpha\in\mathbb{R} \}
\subset
\mathcal{R}(f_0)
$$
be the subclass of the rearrangement class
consisting only of functions, which do not depend on $x_n$ variable.

\noindent Further let
$\bar{\mathcal{R}}_D(f_0)$ be the $w^*$-closure of $\mathcal{R}_D(f_0)$ in $L^2(\Omega)$ (see Lemmas \ref{lem:rearr1}
and \ref{lem:rearr11}).
\end{defin}

\begin{rem}
Without introducing new notations, in the sequel we will interpret functions $h\in L^2(D)$
to be also defined as functions in $L^2(\Omega)$ simply as $$h(x',x_n):=h(x').$$
\end{rem}

\vspace{5mm}

In this paper we will consider the minimization problem
$$
\min_{f\in \bar{\mathcal{R}}_D}\Phi(f)
$$
where $\bar{\mathcal{R}}_D=\bar{\mathcal{R}}_D(\chi_{D_0})$, $D_0\subset D$, is the sub-class 
of the force functions $f$, which do not depend on $x_n$-variable.

There is also a mathematical novelty in this setting. First, there exits no minimizer in the rearrangement class $\mathcal{R}_D$,
which on practice means that the optimal heating cannot be achieved by 
a 0/1 distribution of external heating source (heating elements), as it was the case in the problem without constraint. This means that the 
minimizer will belong
to the weak-$*$ closure $\bar{\mathcal{R}}_D$ of 
$\mathcal{R}_D$ (see Lemmas \ref{lem:rearr1}
and \ref{lem:rearr11}).

Second, the corresponding function $\hat{u}=u_{\hat{f}}$
will be a solution of a new-type obstacle problem, where the obstacle is not acting point-wise and the
Neumann derivative of the function is present on the right hand side of the equation (see equation (\ref{main2})).
In addition we prove that the solutions to equation (\ref{main2}) do not satisfy the comparison principle (see Section \ref{sec:compprinc}).

\vspace{2mm}

In Section \ref{sec:main} we will formulate the main results of the paper, in Section \ref{sec:prelim}
we will introduce some known results and prove technical lemmas. The results related to the optimal
rearrangement problem are presented in Section \ref{sec:rearr}, while the properties of the minimizer
to the new type obstacle problem can be found in Section \ref{sec:fbp}.
The proofs mainly combine two approaches. In Section \ref{sec:rearr} we adapt the methods 
developed by Burton and co-authors in our setting, while in Section \ref{sec:fbp} we 
use techniques known 
from the theory of non-linear partial differential equations to show the regularity of solutions.

\subsection{acknowledgement}
The author is grateful to Behrouz Emamizadeh for introducing the theory of rearrangements to him,
as well as, John Andersson and Henrik Shahgholian for insightful discussions.


\section{Main results}\label{sec:main}

From now on we will assume that the generator function
of the rearrangement class is a characteristic function $f_0(x')=\chi_{D_0}(x')$,
where $D_0\subset D$. The functions $u_f$ and $v_f$ are defined in (\ref{main}) and (\ref{vdef}). We will also
mostly skip writing $\chi_{D_0}$ in $\bar{\mathcal{R}}_D= \bar{\mathcal{R}}_D(\chi_{D_0})$
and ${\mathcal{R}}_D= {\mathcal{R}}_D(\chi_{D_0})$


\begin{thm}\label{thm:minim}
The relaxed minimization problem
$$
\min_{f\in \bar{\mathcal{R}}_D}\Phi(f)
$$
has a unique solution $\hat{f}\in \bar{\mathcal{R}}_D\setminus\mathcal{R}_D$,
$\hat{f}>0$ in $D$, and there exists a constant $\alpha>0$ such that
\begin{equation*}
\hat{v}(x'):=v_{\hat{f}}(x')=\int_0^1 u_{\hat{f}}(x',t)dt \leq\alpha,
\end{equation*}
\begin{equation*}
\{ \hat{f} < 1\}\subset\{\hat{v}=\alpha\}
\end{equation*}
$$
\{\hat{v}<\alpha\}\subset\{\hat{f}=1\}.
$$
Moreover, the function $\hat{U}(x)=\alpha -u_{\hat{f}}$ is the minimizer of the convex functional
\begin{equation}\label{func}
J(U)=\int_\Omega |\nabla U|^2dx +2\int_D V^+ dx'
\end{equation}
among functions $U\in W^{1,2}(\Omega)$ such that $U-\alpha\in W_0^{1,2}(\Omega)$,
where
\begin{equation*}
V(x')=\int_0^1 U(x',x_n)dx_n.
\end{equation*}
\end{thm}

\bigskip


\begin{thm}\label{thm:fbp}
Consider the minimization of the following convex functional
\begin{equation}\label{funcobst}
J(u)=\int_\Omega |\nabla u|^2dx +2\int_D v^+ dx'
\end{equation}
among functions with prescribed boundary values $u\in g + W^{1,2}_0(\Omega) $, in a domain $\Omega=D\times (0,1)$, where
\begin{equation}\label{vdef}
v(x')=\int_0^1 u(x',x_n)dx_n.
\end{equation}
We further assume that $g$ is constant on $D\times \{0\}$ and $D\times \{1\}$ and that
\begin{equation}\label{condg}
0\leq g(x',x_n)\leq (1-x_n)g(x',0)+x_ng(x', 1)
\end{equation}
for all $x'\in\partial D$.

Then the functional $J$ has a unique minimizer $u$, which satisfies the inequality
$$
v(x')=\int_0^1 u(x',x_n)dx_n\geq 0
$$
and the equation
\begin{equation}\label{main2}
\Delta u(x) = \chi_{\{v>0\}}(x') + \chi_{\{v=0\}}(x') [\partial_\nu u (x',0) + \partial_\nu u (x',1)]
\,\,\,\text{in}\,\,\, \Omega.
\end{equation}
\end{thm}


\bigskip

We also prove the existence of weak second derivatives (Corollary \ref{cor:w22})
and
show that the comparison principle holds for the functions $v(x')$, but fails to
hold for the functions $u(x)$ (Theorem \ref{compV} and Remark \ref{rem:compfail}).
The regularity  of the free boundary is briefly discussed in Section \ref{sec:fbpdisc}.


\section{Preliminaries}\label{sec:prelim}
In this section we would like to present several mainly
classical results.

\begin{lem}
For the solutions of (\ref{main}) the following is true
\begin{equation}\label{propLap}
\Phi(f)=\int_\Omega f u_fdx=\int_\Omega |\nabla u_f|^2 dx=\sup_{u\in W_0^{1,2}(\Omega)}\int_\Omega
2fu-|\nabla u|^2dx.
\end{equation}
  \end{lem}
  \begin{proof}
  Proof follows from partial integration and basic calculus of variations.
\end{proof}

\begin{lem}\label{lem:lipschitz}
Let
$$
-\Delta u=h(x)\,\,\,\text{in}\,\,\, \Omega
$$ and $|h(x)|\leq M$ is an integrable function in $\Omega$. Further assume
$\sup_{\Omega}|u|\leq N$. Then
$$
\|u\|_{C^{1,\alpha}(\Omega')}\leq C(n,d)(M+N)
$$
where $ \Omega'\Subset\Omega$ and $d=\text{dist}(\Omega',\Omega^c)$.
\end{lem}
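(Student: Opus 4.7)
The statement is a standard interior $C^{1,\alpha}$ regularity result for Poisson's equation with bounded right-hand side, so the plan is to decompose $u$ into a Newtonian potential part (which absorbs the dependence on $M$) and a harmonic part (which absorbs the dependence on $N$), then apply classical interior estimates for each.

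First I would localize the problem. For each $x_0\in\Omega'$ the ball $B=B_{d/2}(x_0)$ satisfies $\bar B\subset\Omega$, so it suffices to prove a uniform $C^{1,\alpha}$ bound on $B_{d/4}(x_0)$ with constant depending only on $n$ and $d$; a finite covering then yields the estimate on $\Omega'$. Writing $u=u_1+u_2$ on $B$ where
\begin{equation*}
u_1(x)=\int_{B} G_n(x-y)\, h(y)\,dy
\end{equation*}
is the Newtonian potential of the extension of $h$ by zero outside $B$, and $u_2=u-u_1$, one has $-\Delta u_2=0$ in $B$ by construction.

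For $u_1$, classical potential-theoretic estimates (see e.g.\ Gilbarg--Trudinger) give $\|u_1\|_{C^{1,\alpha}(B)}\le C(n,d)\,M$ for any $\alpha\in(0,1)$ that I fix once and for all, together with the sup bound $\|u_1\|_{L^\infty(B)}\le C(n,d)\,M$. For $u_2$, the bound $\|u_2\|_{L^\infty(B)}\le N+C(n,d)\,M$ combined with standard interior gradient/Hessian estimates for harmonic functions yields $\|u_2\|_{C^{1,\alpha}(B_{d/4}(x_0))}\le C(n,d)(N+M)$. Adding the two estimates gives the claim at $x_0$, hence on $\Omega'$ after covering.

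There is no real obstacle here beyond bookkeeping the dependence of constants on $d$ (coming from the size of the localizing ball and from the interior harmonic estimates, which scale like negative powers of $d$). One could alternatively route the argument through $W^{2,p}$ Calder\'on--Zygmund estimates for any $p>n$ followed by Morrey embedding, which produces the same conclusion; I prefer the potential-theoretic split since it makes the linear dependence on $M+N$ transparent and does not require introducing boundary values for cut-off equations.
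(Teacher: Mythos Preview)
Your argument is correct. The paper's own proof is simply a one-line citation to Theorems~8.32 and~8.34 of Gilbarg--Trudinger, which give interior $C^{1,\alpha}$ estimates for weak solutions of general divergence-form elliptic equations. Your potential-theoretic decomposition $u=u_1+u_2$ is a genuinely different and more elementary route for the specific case of the Laplacian: it bypasses the Chapter~8 machinery and makes the linear dependence on $M+N$ completely transparent, at the cost of being tied to constant coefficients. One small caveat: the $C^{1,\alpha}$ bound for the Newtonian potential of a merely bounded $h$ is not quite Lemma~4.1 or~4.2 of Gilbarg--Trudinger (the former yields only $C^1$, the latter assumes H\"older data); the cleanest justification is exactly the $W^{2,p}$ Calder\'on--Zygmund plus Morrey route you mention at the end, or a direct log-Lipschitz estimate on $\nabla u_1$. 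Either way the conclusion stands.
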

\begin{proof}
See Theorems 8.32, 8.34 in \cite{GT}.
\end{proof}

\begin{lem}\label{lem:lipschitzbdr}
Let $\Omega$ be a domain with $C^{1,\alpha}$ boundary and the functions $u$ and $h$
be as in Lemma \ref{lem:lipschitz}. Further assume $u=0$ on $\partial \Omega$. Then
$$
\|u\|_{C^{1,\alpha}(\Omega)}\leq C(n,\partial \Omega)(M+N).
$$
\end{lem}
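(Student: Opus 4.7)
The plan is to bootstrap from the interior estimate of Lemma \ref{lem:lipschitz} and combine it with a local boundary $C^{1,\alpha}$ estimate obtained by flattening the boundary. Since $\partial\Omega$ is compact and of class $C^{1,\alpha}$, it admits a finite cover by open sets $U_i$ such that in each $U_i$ there is a $C^{1,\alpha}$ diffeomorphism $\Psi_i\colon U_i\to B_1$ that sends $U_i\cap\Omega$ to the half-ball $B_1^+=\{y_n>0\}$ and $U_i\cap\partial\Omega$ to the flat piece $\{y_n=0\}$. I would choose an additional open set $\Omega'\Subset\Omega$ such that $\Omega\subset\Omega'\cup\bigcup_i U_i'$ with $U_i'\Subset U_i$.

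On $\Omega'$ I would apply Lemma \ref{lem:lipschitz} directly to obtain $\|u\|_{C^{1,\alpha}(\Omega')}\le C(n,\partial\Omega)(M+N)$. Near the boundary I would push $u$ forward via $\Psi_i$ to obtain $\tilde u=u\circ\Psi_i^{-1}$, which satisfies a linear second-order equation $\tilde L \tilde u=\tilde h$ on $B_1^+$ with $\tilde u=0$ on $\{y_n=0\}$, where $\tilde L$ is in divergence form with $C^{0,\alpha}$ coefficients (the regularity of the coefficients coming from the $C^{1,\alpha}$ regularity of $\Psi_i$ and its Jacobian) and $|\tilde h|$ bounded by a multiple of $M$. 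The boundary Schauder-type estimate, Theorem 8.34 in \cite{GT}, then yields $\|\tilde u\|_{C^{1,\alpha}(B_{1/2}^+)}\le C(\sup|\tilde u|+\sup|\tilde h|)\le C(M+N)$. Pulling back through $\Psi_i$, whose $C^{1,\alpha}$ norm is controlled by the boundary regularity, I would obtain $\|u\|_{C^{1,\alpha}(\Psi_i^{-1}(B_{1/2}^+))}\le C(n,\partial\Omega)(M+N)$.

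Combining the finitely many local estimates with the interior bound on $\Omega'$ and noting that the constants depend only on $n$ and on a finite set of data associated with the cover (which is a function of $\partial\Omega$), I would conclude the global estimate $\|u\|_{C^{1,\alpha}(\Omega)}\le C(n,\partial\Omega)(M+N)$. The main technical point, and essentially the only nonroutine step, is verifying that the transformed equation still fits into the hypotheses of the boundary Schauder estimate in \cite{GT}: one has to track that the ellipticity constants of $\tilde L$ and the $C^{0,\alpha}$ norms of its coefficients remain controlled uniformly over the finite cover. Everything else — the partition of the domain, the interior estimate, and the final patching — is standard.
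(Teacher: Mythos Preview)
Your proposal is correct and follows the same route as the paper: the paper simply cites Theorems~8.33 and~8.34 in \cite{GT}, and what you have sketched is precisely the standard boundary-flattening-plus-covering argument underlying those theorems. In other words, you have spelled out the proof that the paper leaves to the reference.
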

\begin{proof}
See Theorems 8.33, 8.34 in \cite{GT}.
\end{proof}

\begin{lem}\label{lem:lipschitzB}
Let $\Omega=D\times(0,1)$ and
\begin{equation}
\begin{cases} -\Delta u(x) = h (x)  & \mbox{in } \Omega,
 \\
 u=0  & \mbox{on } \partial\Omega,  \end{cases}
\end{equation}
and $|h(x)|\leq M$ is an integrable function in $\Omega$. Further assume
$\sup_{\Omega}|u|\leq N$. Then
$$
\|u\|_{C^{1,\alpha}(D'\times (0, 1))}\leq C(n,d)(M+N),
$$
where $d=\text{dist}(D',D^c)$.

Moreover, if $D$ has $C^{1,\alpha}$ boundary then
$$
\|u\|_{C^{1,\alpha}(\Omega)}\leq C(n,\partial D)(M+N).
$$
\end{lem}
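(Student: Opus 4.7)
The plan is to reduce this lemma to the previous two by extending $u$ across the flat top and bottom of the cylinder via odd reflection in the $x_n$ variable. Since $u$ vanishes on $D\times\{0,1\}$, reflecting $u$ oddly first across $\{x_n=0\}$ and then across $\{x_n=1\}$ produces an extension $\tilde u$ defined on the enlarged cylinder $\tilde\Omega=D\times(-1,2)$, and $h$ is extended by the analogous odd reflections to $\tilde h$. The uniform bounds $|\tilde h|\le M$ and $\sup_{\tilde\Omega}|\tilde u|\le N$ are preserved, and the lateral Dirichlet condition $\tilde u=0$ on $\partial D\times(-1,2)$ still holds.

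The first step is to verify that $\tilde u\in W^{1,2}(\tilde\Omega)$ is a weak solution of $-\Delta\tilde u=\tilde h$ in $\tilde\Omega$. This is standard: the vanishing trace of $u$ on $\{x_n=0\}$ and $\{x_n=1\}$ ensures that the reflection produces no jump in the tangential values, and the weak identity follows by testing against $C_c^\infty$ functions crossing the reflection planes and splitting the integral.

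With the extension in hand, both estimates follow from the earlier lemmas. For the first, any point of $\overline{D'}\times[0,1]$ lies at distance at least $\min(d,1)$ from $\partial\tilde\Omega$, so Lemma~\ref{lem:lipschitz} applied to $\tilde u$ in $\tilde\Omega$ yields the claimed interior bound with a constant depending only on $n$ and $d$. For the second, assuming $\partial D\in C^{1,\alpha}$, the set $\overline\Omega$ sits inside $\overline{\tilde\Omega}$ and meets $\partial\tilde\Omega$ only on the lateral part $\partial D\times[0,1]$, staying bounded away from the artificial top and bottom $D\times\{-1,2\}$ where $\partial\tilde\Omega$ has corners. A finite cover of $\overline\Omega$ by interior balls (handled by Lemma~\ref{lem:lipschitz}) and small half-balls centred at lateral boundary points (handled by a local version of Lemma~\ref{lem:lipschitzbdr}, since the piece of $\partial\tilde\Omega$ in such a half-ball is a $C^{1,\alpha}$ surface inherited from $\partial D$) then assembles into the global $C^{1,\alpha}$ bound.

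The main technical point is verifying the odd reflection at the weak level and localising the boundary estimate so that only the smooth lateral part of $\partial\tilde\Omega$ is used, avoiding the rough edges $\partial D\times\{-1,2\}$; the elliptic content is entirely carried by the two preceding lemmas.
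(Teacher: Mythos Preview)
Your proposal is correct and follows essentially the same approach as the paper: odd reflection in $x_n$ to push the flat faces into the interior, verification that the reflected $\tilde u$ solves $-\Delta\tilde u=\tilde h$ weakly, and then an appeal to Lemmas~\ref{lem:lipschitz} and~\ref{lem:lipschitzbdr}. The paper only writes out the reflection across $\{x_n=0\}$ to $D\times(-1,1)$ (the other side being symmetric), whereas you reflect across both faces at once to $D\times(-1,2)$; you also spell out the localisation needed near the lateral boundary to avoid the artificial corners, which the paper leaves implicit.
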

\begin{proof}
Let us extend the function $u$ by the odd reflection into $\tilde{\Omega}=D\times (-1,1)$
\begin{equation}
\tilde{u}(x',x_n)=
\begin{cases} u(x', x_n)  & \mbox{if }\,\,\, x_n\geq 0,
 \\
- u(x', -x_n)  & \mbox{if }\,\,\, x_n<0.  \end{cases}
\end{equation}
Let us check that
$-\Delta \tilde{u}(x)=\tilde{h}(x)$ weakly in $D\times (-1,1)$
where
\begin{equation}
\tilde{h}(x',x_n)=
\begin{cases} h(x', x_n)  & \mbox{if }\,\,\, x_n> 0,
 \\
- h(x', -x_n)  & \mbox{if }\,\,\, x_n<0  \end{cases}
\end{equation}
is a bounded function.
\begin{equation}\label{eq:check1}
\int_{\tilde{\Omega}} \nabla \tilde{u}(x)\nabla \phi(x) d x=
\int_{\tilde{\Omega}} \nabla \tilde{u}(x)\nabla(\phi(x) \varphi_\delta(x_n)) d x+
\int_{\tilde{\Omega}} \nabla \tilde{u}(x)\nabla (\phi(x) (1-\varphi_\delta(x_n)))d x=I_1+I_2
\end{equation}
where
$$
\varphi_\delta(t)=
\begin{cases} 1  & \mbox{if }\,\,\, |t|<\delta/2,
 \\
 0  & \mbox{if }\,\,\, |t|>\delta  \end{cases}
$$
is an even function from $C^\infty_0(\mathbb{R})$ with values in $[0,1]$, such that $|\varphi'(t)|\leq 4/\delta$.
Let us now estimate the integrals on the right hand side of (\ref{eq:check1}).
\begin{multline}
I_1=\int_\Omega \nabla u\nabla [(\phi(x', x_n)-\phi(x', -x_n))\varphi_\delta(x_n) ] d x=\\
\int_\Omega h(x)[(\phi(x', x_n)-\phi(x', -x_n))\varphi_\delta(x_n) ]d x+\\\underbrace{
\int_{\partial \Omega} u(x)\partial_\nu[(\phi(x', x_n)-\phi(x', -x_n))\varphi_\delta(x_n) ]d \sigma}_{=0}
\to_{\delta\to 0}0,
\end{multline}
where we have used the continuity of $\phi\in C^\infty_0(\tilde{\Omega})$. On the other hand
\begin{equation}
I_2=\int_{\tilde{\Omega}}h(x)\phi(x', x_n)(1-\varphi_\delta(x_n)) d x\to_{\delta\to 0}
\int_{\tilde{\Omega}}h(x)\phi(x', x_n) d x.
\end{equation}

\bigskip

The proof follows now from Lemmas \ref{lem:lipschitz} and \ref{lem:lipschitzbdr}.
\end{proof}

\begin{lem}\label{lem:n-1dim}
Let
\begin{equation}
\begin{cases} -\Delta u(x) = f (x')  & \mbox{in } \Omega,
 \\
 u=0  & \mbox{on } \partial\Omega,  \end{cases}
\end{equation}
then
\begin{equation}\label{eq:sym12}
u(x',x_n)=u(x',1-x_n).
\end{equation}
and the function $v_f(x')=\int_0^1 u_f(x',x_n)dx_n$ satisfies the following equation

\begin{equation}\label{x'laplace}
\begin{cases}
-\Delta_{x'} v=f(x')+2\partial_\nu u(x',0)  & \mbox{in } D,
 \\
 v=0  & \mbox{on } \partial D.  \end{cases}
\end{equation}
\end{lem}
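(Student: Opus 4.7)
The statement has two claims: the $x_n$-reflection symmetry of $u$ about $x_n = 1/2$, and the PDE satisfied by the $x_n$-average $v$. I would handle them in order, since the PDE derivation uses the symmetry in an essential way.

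For the symmetry \eqref{eq:sym12}, the plan is to exploit the invariance of the problem under the reflection $R: x_n \mapsto 1 - x_n$. Set $\tilde u(x', x_n) = u(x', 1 - x_n)$. The source $f$ depends only on $x'$, hence $f \circ R = f$; the cylinder $\Omega = D \times (0,1)$ and its boundary are invariant under $R$; and the Dirichlet data transport to themselves. A direct computation (or the change of variables in the weak formulation) shows $-\Delta \tilde u = f$ in $\Omega$ with $\tilde u = 0$ on $\partial \Omega$. Since the Dirichlet problem has a unique weak solution, $\tilde u = u$, which is \eqref{eq:sym12}.

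For \eqref{x'laplace}, the plan is to integrate the equation $-\Delta u = f(x')$ in $x_n$ over $(0,1)$, splitting the Laplacian as $\Delta = \Delta_{x'} + \partial_{x_n}^2$. Commuting $\Delta_{x'}$ past the integral yields $-\Delta_{x'} v(x')$, while the fundamental theorem of calculus handles the normal part and produces $-[\partial_{x_n} u(x',1) - \partial_{x_n} u(x',0)]$. Differentiating the symmetry identity from the first part in $x_n$ gives $\partial_{x_n} u(x',1) = -\partial_{x_n} u(x',0)$, so this boundary contribution simplifies to $2\partial_{x_n} u(x',0)$; since the outward unit normal to $\Omega$ on the bottom face is $-e_n$, this equals $-2\partial_\nu u(x',0)$, and the claimed PDE follows after moving this term to the right-hand side. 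The boundary condition $v=0$ on $\partial D$ is immediate from $u = 0$ on the lateral boundary $\partial D \times (0,1)$.

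The main technical obstacle is justifying the manipulations, in particular the existence of the trace $\partial_\nu u(x',0)$ as a function on $D$ and the right to commute $\Delta_{x'}$ with integration in $x_n$. With $f \in L^2$ one only has $u \in H^2_{\mathrm{loc}}(\Omega)$ a priori, but Lemma \ref{lem:lipschitzB} applied on strips $D' \times (0,1)$ with $D' \Subset D$ gives $u \in C^{1,\alpha}(\overline{D' \times (0,1)})$; in particular $\partial_{x_n} u$ has a continuous trace on the top and bottom faces, which makes the fundamental theorem of calculus legitimate pointwise in $x' \in D'$ and justifies interpreting \eqref{x'laplace} classically on $D'$, hence distributionally on all of $D$. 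An equivalent, trace-free route is to derive \eqref{x'laplace} directly in the weak sense: test the equation for $u$ against $\phi(x') \varphi_\delta(x_n)$ with $\varphi_\delta$ a smooth cutoff that approximates $\mathbf{1}_{(\delta, 1-\delta)}$, and pass to the limit $\delta \to 0$ using the same odd-reflection technique employed in the proof of Lemma \ref{lem:lipschitzB}; the boundary flux $2\partial_\nu u(x',0)$ then emerges naturally as the limit of $\int \partial_{x_n} u \cdot \varphi_\delta'(x_n) \phi(x')\,dx$. Either route works, but the cleanest write-up is probably the classical one, piggybacking on the regularity that Lemma \ref{lem:lipschitzB} already provides.
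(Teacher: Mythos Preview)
Your proposal is correct. The symmetry argument via uniqueness is exactly what the paper does (stated in one line). For the PDE \eqref{x'laplace} you outline two routes; the paper takes precisely your second one: it tests the equation against $\psi(x')\varphi_\delta(x_n)$ with a piecewise-linear $\varphi_\delta$ approximating $\mathbf{1}_{(0,1)}$, splits into the $\nabla'$ and $\partial_n$ parts, and lets $\delta\to 0$, invoking Lemma~\ref{lem:lipschitzB} to identify the limit of $\int \psi\,\partial_n u\,\varphi_\delta'$ with the trace term $\partial_n u(x',0)-\partial_n u(x',1)$, then uses the symmetry to collapse this to $2\partial_\nu u(x',0)$. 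Your preferred ``classical'' route (integrate $-\Delta u=f$ in $x_n$, apply the fundamental theorem of calculus) is a minor variant that reaches the same identity with slightly less bookkeeping once the $C^{1,\alpha}$ regularity from Lemma~\ref{lem:lipschitzB} is in hand; the paper's weak derivation has the small advantage that it makes explicit where the regularity is actually used (only in the boundary-flux limit), but there is no substantive difference.
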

\begin{proof}
(\ref{eq:sym12}) follows from the uniqueness of the solution.

Let us take $\phi_\delta(x)=\psi(x')\varphi_\delta(x_n)$, where $\psi\in C_0^\infty(D)$
and
$$
\varphi_\delta(x_n)=\begin{cases}
\frac{1}{\delta(1-\delta)}x_n &\text{if} \,\,\,x_n\in(0,\delta)\\
\frac{1}{1-\delta} &\text{if} \,\,\,x_n\in(\delta,1-\delta)\\
\frac{1}{\delta(1-\delta)}-\frac{1}{\delta(1-\delta)}x_n &\text{if} \,\,\,x_n\in(1-\delta,1).
\end{cases}
$$

\begin{multline}
\int_D f(x')\psi(x')dx'=\int_\Omega f(x')\phi_\delta(x)dx=\int_\Omega \nabla u\nabla\phi_\delta dx\\
=\int_\Omega \varphi_\delta(x_n)\nabla'u(x)\nabla' \psi(x')dx+
\int_\Omega \psi(x')\partial_nu(x)\partial_n \varphi_\delta(x_n)dx.
\end{multline}
Passing to the limit as $\delta\to 0$ we obtain
$$
\int_\Omega \varphi_\delta(x_n)\nabla'u(x)\nabla' \psi(x')dx\to_{\delta\to 0}
\int_\Omega \nabla'u(x)\nabla' \psi(x')dx=\int_D \nabla'v(x)\nabla' \psi(x')dx'
$$
and using Lemma \ref{lem:lipschitzB}
\begin{multline}
\int_\Omega \psi(x')\partial_nu(x)\partial_n \varphi_\delta(x_n)dx=\\
\frac{1}{\delta(1-\delta)}\left[\int_D\int_0^\delta  \psi(x')\partial_nu(x) dx'dx_n -
 \int_D\int_{1-\delta}^1 \psi(x')\partial_nu(x) dx'dx_n\right]\to_{\delta\to 0}\\
\int_D \psi(x')[\partial_n u(x',0)-\partial_n u(x',1)] dx'.
\end{multline}
Thus
$$
\int_D \nabla'v(x)\nabla' \psi(x')dx'=
\int_D f(x')\psi(x')dx'-\int_D \psi(x')[\partial_n u(x',0)-\partial_n u(x',1)] dx'.
$$
From (\ref{eq:sym12}) we obtain $\partial_n u(x',0)=-\partial_n u(x',1)$.
\end{proof}

\begin{lem}\label{lem:rearr1}
Let $D_0\subset D$ and $\bar{\mathcal{R}}(\chi_{D_0})$ be the $w^*$-closure of $\mathcal{R}(\chi_{D_0})$ in $L^2(D)$. Then
$$
\bar{\mathcal{R}}(\chi_{D_0})=\{h\,\, | \,\, 0\leq h\leq 1,\,\,\text{and}\,\,\int_D hdx'=|D_0| \}
$$
is convex and weakly compact in $L^2(D)$. Moreover, the set of its extreme points
is
$$
ext(\bar{\mathcal{R}}(\chi_{D_0}))=\mathcal{R}(\chi_{D_0}).
$$
\end{lem}

\begin{proof}
See \cite{B1}, \cite{B2}, \cite{BR}, \cite{EL}.
\end{proof}

\begin{lem}\label{lem:rearr11}
Let $D_0\subset D$ and $\bar{\mathcal{R}}_D(\chi_{D_0})$ be the $w^*$-closure of $\mathcal{R}_D(\chi_{D_0})$ in $L^2(\Omega)$. Then
$$
\bar{\mathcal{R}}(\chi_{D_0})=\{h\in L^2_D(\Omega)\,\, | \,\, 0\leq h\leq 1,\,\,\text{and}\,\,\int_\Omega hdx'=|D_0| \}
$$
is convex and weakly compact in $L^2(\Omega)$. Moreover, the set of its extreme points
is
$$
ext(\bar{\mathcal{R}}_D(\chi_{D_0}))=\mathcal{R}_D(\chi_{D_0}).
$$
\end{lem}
\begin{proof}
Follows from Lemma \ref{lem:rearr1}.
\end{proof}

\begin{lem}\label{lem:rearr2}
The functional $\Phi$ (see (\ref{propLap})) is

(i) weakly sequentially continuous in $L^2$,

(ii) strictly convex,

(iii) G\^ateaux differentiable. Moreover, $\Phi'(f)$ can be identified with $2u_f$ if

we consider $\Phi$ in $L^2(\Omega)$ or $2v_f$ if we consider $\Phi$ in $L^2(D)$.
\end{lem}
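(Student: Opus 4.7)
My plan is to exploit three structural facts about the solution map $T: f \mapsto u_f$, namely that $T$ is linear, $L^2$-bounded, and compact from $L^2(\Omega)$ to $L^2(\Omega)$ (by standard elliptic regularity plus Rellich), and then to use symmetry of the Green operator via integration by parts, $\int f u_g\,dx = \int \nabla u_f\cdot \nabla u_g\,dx = \int g u_f\,dx$. With these at hand, each of the three claims reduces to a short computation.

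For (ii), strict convexity, I would use linearity of $T$: if $f_t = tf + (1-t)g$ then $u_{f_t} = t u_f + (1-t)u_g$, and the pointwise identity $|ta+(1-t)b|^2 = t|a|^2 + (1-t)|b|^2 - t(1-t)|a-b|^2$ applied to $a=\nabla u_f$, $b=\nabla u_g$ gives
$$
t\Phi(f) + (1-t)\Phi(g) - \Phi(f_t) = t(1-t)\int_\Omega |\nabla u_{f-g}|^2\,dx.
$$
This is strictly positive unless $u_{f-g}\equiv 0$ in $W_0^{1,2}(\Omega)$, which, via the equation $-\Delta u_{f-g} = f-g$, forces $f=g$.

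For (iii), I would compute the difference quotient directly. By linearity and the symmetry noted above,
$$
\Phi(f+\varepsilon h) - \Phi(f) = \varepsilon\!\int_\Omega (f u_h + h u_f)\,dx + \varepsilon^2\!\int_\Omega h u_h\,dx = 2\varepsilon\!\int_\Omega h u_f\,dx + \varepsilon^2\Phi(h),
$$
so dividing by $\varepsilon$ and sending $\varepsilon\to 0$ identifies the Gâteaux derivative at $f$ as $h\mapsto 2\int_\Omega h u_f\,dx$, i.e. $\Phi'(f) = 2u_f$ in $L^2(\Omega)$. When we restrict $\Phi$ to $L^2(D)$, the admissible perturbations $h$ depend only on $x'$, so Fubini together with (\ref{vdef}) rewrites $2\int_\Omega h(x')u_f(x)\,dx = 2\int_D h(x') v_f(x')\,dx'$, giving $\Phi'(f) = 2v_f$ in $L^2(D)$.

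The main delicate step is (i), weak sequential continuity. The plan is: given $f_n \rightharpoonup f$ in $L^2(\Omega)$, use the a priori bound $\|u_g\|_{W^{2,2}}\leq C\|g\|_{L^2}$ together with the Rellich compact embedding $W_0^{1,2}(\Omega)\hookrightarrow L^2(\Omega)$ to conclude that $T$ is compact on $L^2(\Omega)$, hence $u_{f_n}\to u_f$ strongly in $L^2(\Omega)$. Writing
$$
\Phi(f_n) - \Phi(f) = \int_\Omega f_n (u_{f_n}-u_f)\,dx + \int_\Omega (f_n-f) u_f\,dx,
$$
the first term is bounded by $\|f_n\|_{L^2}\|u_{f_n}-u_f\|_{L^2}\to 0$ (weak limits are $L^2$-bounded), and the second tends to $0$ by the definition of weak convergence. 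The only real obstacle is ensuring the strong $L^2$ convergence of $u_{f_n}$; I expect this to be entirely standard once the compactness of $T$ is invoked, and the same argument transports verbatim to the $L^2(D)$ setting via $v_f$.
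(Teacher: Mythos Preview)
Your argument is correct in all three parts. The paper itself does not prove this lemma at all; it simply cites \cite{BM}, so there is no in-paper proof to compare against. Your treatment is the standard self-contained derivation: linearity and symmetry of the solution operator $f\mapsto u_f$ give (ii) and (iii) by direct computation, and compactness of $T$ (via the energy estimate $\|u_g\|_{W_0^{1,2}}\le C\|g\|_{L^2}$ and Rellich) gives the strong $L^2$ convergence needed for (i).

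One cosmetic remark: in (i) you invoke the $W^{2,2}$ a priori bound, which needs some boundary regularity not assumed for $D$ in the paper. You do not actually need it; the energy estimate $\|u_g\|_{W_0^{1,2}}\le C\|g\|_{L^2}$, which holds for any bounded $\Omega$, already suffices for the Rellich step you use. With that adjustment your proof goes through without any domain hypotheses beyond boundedness.
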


\begin{proof}
The proof can be found in \cite{BM}.
\end{proof}

\begin{lem}\label{lem:rearr3}
For $f,g\in L^2_+(D)$ there exists
$\widetilde{f}\in \mathcal{R}(f)$ such that
functional
$$
\int_D \widetilde{f}g dx\leq \int_D hg dx,
$$
for all $h\in \bar{\mathcal{R}}(f)$.
\end{lem}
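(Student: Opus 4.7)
The plan is to obtain $\tilde f$ via an abstract Bauer/Krein--Milman argument, exploiting the weak compactness of $\bar{\mathcal R}(f)$ and the extreme-point description in Lemma \ref{lem:rearr1}, rather than attempting a direct constructive definition. A constructive approach would set $\tilde f$ to be a decreasing rearrangement along the level sets of $g$, which is unambiguous when $g$ has only null level sets but becomes delicate on flat pieces of $g$; the abstract route bypasses this issue entirely.

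First I would regard $L : L^2(D)\to\R$, $L(h):=\int_D h\,g\,dx$, as a bounded linear, hence weakly sequentially continuous, functional. By Lemma \ref{lem:rearr1} the set $\bar{\mathcal R}(f)$ is convex and weakly compact, so the infimum $m:=\inf_{h\in\bar{\mathcal R}(f)}L(h)$ is attained and the minimizing set
$$
\mathcal F:=\{h\in\bar{\mathcal R}(f)\,|\,L(h)=m\}
$$
is nonempty, convex, and weakly compact. Since $L$ is affine, $\mathcal F$ is a \emph{face} of $\bar{\mathcal R}(f)$: if $h=\lambda h_1+(1-\lambda)h_2\in\mathcal F$ with $h_i\in\bar{\mathcal R}(f)$ and $\lambda\in(0,1)$, then $m=L(h)=\lambda L(h_1)+(1-\lambda)L(h_2)\geq m$ forces $L(h_i)=m$, hence $h_i\in\mathcal F$.

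Applying the Krein--Milman theorem to the weakly compact convex set $\mathcal F$ yields an extreme point $\tilde f$ of $\mathcal F$. The face property just established implies $\tilde f$ is also an extreme point of $\bar{\mathcal R}(f)$, so Lemma \ref{lem:rearr1} gives $\tilde f\in\mathcal R(f)$, as required.

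The main obstacle I anticipate is a mismatch of hypotheses: Lemma \ref{lem:rearr1} is stated only for $0\leq f_0\leq 1$, while here $f\in L^2_+(D)$ is only assumed positive and $L^2$-integrable. This can be handled either by truncation/rescaling $f_N:=\min(f,N)/N$ followed by passing to the limit, or by invoking the more general form of the extreme-point theorem in \cite{B1}, \cite{EL}, valid for any $f\in L^p$ with $1\le p<\infty$; both routes preserve the identification $\mathrm{ext}(\bar{\mathcal R}(f))=\mathcal R(f)$, on which the Krein--Milman step crucially depends.
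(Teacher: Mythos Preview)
Your argument is correct. The paper does not supply its own proof of this lemma; it simply refers the reader to \cite{B1}, so there is no detailed argument to compare against. The Bauer/Krein--Milman route you take---minimize the weakly continuous linear functional $L$ over the weakly compact convex set $\bar{\mathcal R}(f)$, observe that the minimizing set is a face, and extract an extreme point---is the standard abstract proof in the rearrangement literature and is essentially what one finds in the cited references. Your observation about the hypothesis mismatch with Lemma~\ref{lem:rearr1} is apt: for the paper's actual application only the generator $f_0=\chi_{D'}$ is used, so the restriction $0\le f_0\le 1$ is harmless there, but the lemma as stated for general $f\in L^2_+(D)$ indeed needs the broader extreme-point identification from \cite{B1} or \cite{EL}, exactly as you note.
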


\begin{proof}
The proof can be found in \cite{B1}.
\end{proof}


\section{The constrained rearrangement problem}\label{sec:rearr}

\begin{proof}[Proof of Theorem \ref{thm:minim}]

By Lemmas \ref{lem:rearr11} and \ref{lem:rearr2}
$$
\min_{f\in \bar{\mathcal{R}}_D}\Phi(f)
$$
has a solution since $\bar{\mathcal{R}}_D$ is weakly compact and $\Phi$ is weakly continuous. Further,
the minimizer
$\hat{f}\in \bar{\mathcal{R}}_D$ is unique,
since $\Phi$ is strictly convex.

Let us now prove that $\hat{f}\notin\mathcal{R}_D $. The condition for the minimizer is
$$
0\in \partial \Phi(\hat{f})+ \partial\xi_{\bar{\mathcal{R}}_D}(\hat{f}),
$$
where $\partial \Phi$ is the sub-differential and
$$\xi_{\bar{\mathcal{R}}_D}(g)=
\begin{cases}
0  & \mbox{if } g\in \bar{\mathcal{R}}_D,
 \\
\infty  & \mbox{if } g\notin \bar{\mathcal{R}}_D,  \end{cases}
$$
see \cite{CLSWbook}.
This means that $-2\hat{v}\in \partial\xi_{\bar{\mathcal{R}}_D}(\hat{f})$. Since
$$
\partial\xi_{\bar{\mathcal{R}}_D}(\hat{f})=\left\{ w\in L^2(D)\,\, : \,\,
\xi_{\bar{\mathcal{R}}_D}(f)-\xi_{\bar{\mathcal{R}}_D}(\hat{f})\geq
\int_D (f-\hat{f})wdx'
 \right\}
$$
we obtain
\begin{equation}\label{linear}
\int_D f \hat{v} dx'\geq \int_D \hat{f}\hat{v} dx'.
\end{equation}
for all $f\in\bar{\mathcal{R}}_D $. By Lemma \ref{lem:rearr3} there exists
$$
\widetilde{f}=\chi_{\widetilde{D}}\in ext (\bar{\mathcal{R}}_D )=\mathcal{R}_D,
$$
where $\widetilde{D}\subset D$, such that 
\begin{equation}\label{linear1}
\int_D \widetilde{f} \hat{v} dx' = \int_D \hat{f}\hat{v} dx'.
\end{equation}


{\bf Claim 1:}
\begin{equation}\label{eq:poqralpha1}
\alpha=\sup_{\widetilde{D}} \hat{v}\leq \inf_{D\setminus \widetilde{D}} \hat{v}.
\end{equation}
This follows from (\ref{linear}) and (\ref{linear1}). The idea of the proof is based on the bathtub principle (see \cite{LLbook}):
if (\ref{eq:poqralpha1})  fails to hold, we can rearrange the function $\widetilde{f}$
such that the integral $\int_D \widetilde{f}\hat{v}dx'$ decreases, by assigning the value $1$ to $\widetilde{f}$
where $\hat{v}$ is small and assigning the value $0$ to $\widetilde{f}$ where $\hat{v}$ is large
(for details see also \cite{EL}, equation (3.17)).


\bigskip
{\bf Claim 2:}
\begin{equation}\label{eq:poqralpha2}
\hat{f}=\widetilde{f}=\chi_{\widetilde{D}}=1, \,\,\text{in}\,\, \{\hat{v}<\alpha\}.
\end{equation}
The idea of the proof is the same as above: if (\ref{eq:poqralpha2}) fails to hold then
$$
\int_{D\setminus \widetilde{D}}\hat{f}dx'=\int_{ \widetilde{D}}(1-\hat{f})dx'>0,
$$
thus we can replace the function $\hat{f}$ by a function  $f\in\bar{\mathcal{R}}_D $ which has
larger values in $\{\hat{v}<\alpha\}\subset \widetilde{D}$ and smaller values in $D\setminus \widetilde{D}$.
As a result
\begin{equation*}
\int_D f \hat{v} dx'<\int_D \hat{f}\hat{v} dx',
\end{equation*}
which contradicts (\ref{linear}).

\bigskip


{\bf Claim 3:}
$$
\{\hat{v}>\alpha\}\subset D^\#:=\{\hat{f}=0\}.
$$
We know that $\int_D \widetilde{f}\hat{v}dx'=\int_D\hat{f}\hat{v}dx'$, and
\begin{equation}
\int_D \widetilde{f}\hat{v}dx'=
\int_{\{\hat{v}\geq \alpha\}} \widetilde{f}\hat{v}dx' +
\int_{\{\hat{v}<\alpha\}} \widetilde{f}\hat{v}dx'=
\int_{\{\hat{v}\geq\alpha\}} \hat{f}\hat{v}dx'
+\int_{\{\hat{v}<\alpha\}} \hat{f}\hat{v}dx'
=\int_D\hat{f}\hat{v}dx'.
\end{equation}
On the other hand $\int_{\{\hat{v}<\alpha\}} \widetilde{f}\hat{v}dx'=
\int_{\{\hat{v}<\alpha\}} \hat{f}\hat{v}dx'=
\int_{\{\hat{v}<\alpha\}} \hat{v}dx'$
and $\widetilde{f}=0$ on $\{\hat{v}>\alpha\}$.
This means that
\begin{equation}\alpha \int_{\{\hat{v}\geq \alpha\}} \hat{f}dx'=
\alpha \int_{\{\hat{v}\geq \alpha\}} \widetilde{f}dx'=
\int_{\{\hat{v}\geq \alpha\}} \widetilde{f}\hat{v}dx' =
\int_{\{\hat{v}\geq\alpha\}} \hat{f}\hat{v}dx'
\geq\alpha \int_{\{\hat{v}\geq \alpha\}} \hat{f}dx',
\end{equation}
where the last inequality will be strict if $\{\hat{v}>\alpha\}\cap \{\hat{f}>0\}$
has a positive measure.

\bigskip


{\bf Claim 4:}
$$
D^\# \,\,\,\text{has no interior.
Thus $\hat{v}\leq\alpha$.}
$$
From (\ref{x'laplace}) and the Hopf's lemma it follows that
$$
\Delta_{x'}\hat{v}(x')=-2\partial_\nu u(x',0)>0 \,\,\,\text{in}\,\,\,\text{int}(D^\#)
$$
and $\hat{v}\geq \alpha$ in $\text{int}(D^\#)$. This means that there exists $y\in \partial(\text{int}(D^\#))$
such that $\hat{v}(y)=\beta> \alpha$, which contradicts Claim 3 and continuity of $\hat{v}$.

\bigskip
{\bf Claim 5:}
$$
\hat{f}>0.
$$
We need to verify this only in $\text{int}(\{\hat{v}=\alpha\})$ where
$$
0=\Delta_{x'} \hat{v}=-
\hat{f}(x')-2\partial_\nu \hat{u}(x',0)
$$
and the outer normal derivative of $\hat{u}$ is not vanishing in $D$ by Hopf lemma.

\bigskip


{\bf Claim 6:}
$$
\hat{f}\notin {\mathcal{R}}_D= {\mathcal{R}}_D(\chi_{D_0}).
$$
This follows from the positivity of $\hat{f}$, since otherwise $\{\hat{f}=0\}\not=\emptyset$.

\bigskip


{\bf Claim 7:}
$\hat{U}=\alpha-\hat{u}$ minimizes the functional (\ref{func}).

\noindent From (\ref{propLap}) we can obtain that $\hat{U}$ minimizes the functional
$$
I(U)=\int_\Omega |\nabla U|^2 +2\hat{f}Udx=\int_\Omega |\nabla U|^2dx +2\int_D \hat{f}Vdx'
$$
among $U\in W^{1,2}(\Omega)$ such that $U=\alpha$ on $\partial \Omega$. For any such function $U$ we have
$$
J(U)\geq I(U)\geq I(\hat{U})=J(\hat{U}).
$$
\end{proof}

\section{New type of obstacle problem}\label{sec:fbp}


In this section we discuss the new type of obstacle problem introduced in Theorem \ref{thm:fbp}, where the obstacle is acting
not on the function $u$, but on the integral of $u$ with respect to $x_n$ variable. As a result,
the free boundary is not a level set for the function $u$.
\subsection{Existence of solutions}

\begin{proof}[Proof of Theorem \ref{thm:fbp}]
Observe that
$$
J(u)=\int_\Omega |\nabla u|^2dx +2\int_D v^+ dx'=\int_\Omega |\nabla u|^2 +2u\chi_{\{v>0\}} dx
$$
and take the variations $u_\epsilon(x)=u(x)+\epsilon \phi(x)$, where $\phi(x)\geq 0$.

For $\epsilon>0$ the variation gives
$$
2\int_\Omega \nabla u\nabla\phi dx +2\int_\Omega \chi_{\{ v\geq 0\}}\phi dx\geq 0
$$
and for $\epsilon<0$
$$
2\int_\Omega \nabla u\nabla\phi dx +2\int_\Omega \chi_{\{ v> 0\}}\phi dx\leq 0.
$$
Thus
$$
\int_\Omega \chi_{\{ v> 0\}}\phi dx \leq -\int_\Omega \nabla u\nabla\phi dx\leq \int_\Omega \chi_{\{ v\geq 0\}}\phi dx
$$
and the distribution $-\int_\Omega \nabla u\nabla\phi dx$ is a positive measure given by a
function identified with $\Delta u (x)$, such that
$$
 -\int_\Omega \nabla u\nabla\phi dx=\int_\Omega \Delta u(x)\phi(x)dx
$$
and
\begin{equation}\label{eq:bddLapl}
\chi_{\{v>0\}}\leq \Delta u \leq \chi_{\{v\geq 0\}}.
\end{equation}
\bigskip

{\bf Claim 1:} $\Delta u$ does not depend on $x_n$.

\noindent Let us consider the variation of the functional $J$ with test function
$u_\epsilon(x)=u(x)+\epsilon \phi(x)$
where
$\phi(x)=\varphi(x',x_n)-\varphi(x',x_n -a)$ such that
$\varphi(x',x_n), \varphi(x',x_n -a)\in C^\infty_0(\Omega)$.
Then $\int_0^1 \phi(x',x_n)dx_n=0$ and thus the second term of the functional does not contribute to the variation.
The contribution of the first term is
$$
\int_\Omega \nabla u\nabla \varphi(x',x_n) dx-\int_\Omega \nabla u\nabla \varphi(x',x_n-a) dx=0,
$$
which proves that $\Delta u$ does not depend on $x_n$.
\bigskip

{\bf Claim 2:}
\begin{equation}\label{x'laplace1}
\Delta_{x'} v=\Delta u(x')- [\partial_\nu u (x',0) + \partial_\nu u (x',1)]  \,\,\, \text{in } D.
\end{equation}
Follows from Lemma \ref{lem:n-1dim}.

\bigskip

{\bf Claim 3:} $$\{v<0\}=\emptyset.$$

Assume $\{v<0\}=D^*\subset D$. By continuity $D^*$ is open, $v=0$ on $\partial D^*$ and
$\Delta u=0$ in $D^*\times(0,1)$. By (\ref{x'laplace1})
$$
\Delta_{x'}v=- [\partial_\nu u (x',0) + \partial_\nu u (x',1)]  \,\,\, \text{in } D^*.
$$
Using the fact the boundary data $g$ is constant on $D\times \{0\}$ and $D\times \{1\}$, 
the condition (\ref{condg}), as well as the sub-harmonicity of $u$ we obtain 
by comparison principle
that  
$$
u(x',x_n)\leq (1-x_n)g(x',0)+x_ng(x',1), \,\,\,\text{for}\,\,\,x\in \Omega.
$$
Thus
$$
\Delta_{x'}v=- [\partial_\nu u (x',0) + \partial_\nu u (x',1)] \leq 0 \,\,\, \text{in } D^*,
$$
a contradiction. 
\bigskip

The equation (\ref{main2}) follows from (\ref{eq:bddLapl}) and (\ref{x'laplace1}).

\end{proof}


\subsection{Existence of weak second derivatives}




In this section we apply a difference quotient argument to show the existence of weak second derivatives. As in the case of the classical obstacle problem the method deals with the 
regularity of the function and not the regularity of the free boundary set.

\begin{lem}\label{thm:2nd_der1}
Let $ u $ be the minimizer of (\ref{funcobst}) in $\Omega=D\times (0,1)$ and $u$ is constant on $D\times \{0\}$
and on $D\times\{1\}$.
Then for any compact $\mathcal{C}\subset D$ there exists a constant $C$ depending only on $\text{dist}(\mathcal{C}, D^c)$
such that
\begin{equation}\label{ineqlemma}
\int_{\mathcal{C}\times (0,1)} \left| \frac{\nabla (u(x+eh)-u(x))}{h}  \right|^2dx\leq C
\int_\Omega \left| \frac{ u(x+eh)-u(x)}{h}  \right|^2dx
\end{equation}
for all $|h|<\text{dist}(\mathcal{C}, D^c)/2$ and all directions $e \bot e_n$.
\end{lem}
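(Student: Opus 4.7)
The plan is to establish (\ref{ineqlemma}) via a two-sided variational comparison between $u$ and its translate in the direction $e\perp e_n$, exploiting the translation invariance of the functional $J$ in those directions.

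First I would fix a cutoff $\eta\in C^\infty_c(D)$ with $\eta\equiv 1$ on $\mathcal{C}$, $0\leq\eta\leq 1$, $|\nabla\eta|\leq C/d$, supported at distance $>|h|$ from $\partial D$; extend trivially to $\Omega$ by $\eta(x):=\eta(x')$. Write $u_h(x):=u(x+eh)$ and $w:=u_h-u$, and introduce the competitors
\[
u_1 := u+\tfrac{\eta^2}{2}w,\qquad u_2 := u_h-\tfrac{\eta^2}{2}w.
\]
The perturbation $\tfrac{\eta^2}{2}w$ is compactly supported in $\Omega\cap\Omega_{-h}$: $\eta$ vanishes near $\partial D$, and the hypothesis that $u$ is constant on $D\times\{0,1\}$ forces $w\equiv 0$ on the top and bottom faces. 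Hence $u_1\in u+W^{1,2}_0(\Omega)$ is admissible against $u$, while $u_2$ is admissible against $u_h$ on $\Omega_{-h}=(D-eh)\times(0,1)$; note that $u_h$ is itself a minimizer of $J$ on $\Omega_{-h}$ with the shifted Dirichlet data by translation invariance.

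Adding the two minimality inequalities $J(u)\leq J(u_1)$ and $J_{\Omega_{-h}}(u_h)\leq J_{\Omega_{-h}}(u_2)$ localizes all differences to the support of $\eta$. Since $v_1=(1-\tfrac{\eta^2}{2})v+\tfrac{\eta^2}{2}v_h$ and $v_2=\tfrac{\eta^2}{2}v+(1-\tfrac{\eta^2}{2})v_h$ are convex combinations of $v,v_h$, the convexity of $t\mapsto t^+$ yields $v_1^++v_2^+\leq v^++v_h^+$, so the obstacle contribution has the favorable sign and can be discarded. The parallelogram identity applied to $(\nabla u_1,\nabla u_2)$, combined with $\nabla u_1+\nabla u_2=\nabla u+\nabla u_h$, then gives
\[
(|\nabla u_1|^2+|\nabla u_2|^2)-(|\nabla u|^2+|\nabla u_h|^2) = -\eta^2\big(1-\tfrac{\eta^2}{2}\big)|\nabla w|^2-2(1-\eta^2)\eta\, w\,\nabla w\cdot\nabla\eta+2\eta^2 w^2|\nabla\eta|^2,
\]
whose integral over $\Omega$ must be nonnegative.

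The conclusion is then standard. Applying Young's inequality to the cross term and invoking the elementary bound $\eta^2\leq 2\eta^2(1-\eta^2/2)$ on $[0,1]$ lets the resulting $\varepsilon\int\eta^2|\nabla w|^2$ absorb into the principal term $\int\eta^2(1-\eta^2/2)|\nabla w|^2$, yielding $\int\eta^2|\nabla w|^2\,dx\leq Cd^{-2}\int w^2\,dx$. Restricting the left-hand side to $\mathcal{C}\times(0,1)$, where $\eta\equiv 1$, and dividing by $h^2$ produces (\ref{ineqlemma}). The main delicate step I anticipate is the admissibility check for the competitors: it is precisely there that the hypothesis on $u$ being constant on the top and bottom faces of the cylinder is used essentially, to ensure that $\eta^2 w$ vanishes on the entirety of $\partial\Omega$ in spite of $\eta$ depending only on $x'$.
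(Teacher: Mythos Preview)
Your proof is correct and follows the same overall strategy as the paper: perturb both $u$ and its translate $u_h$ by a cutoff times the difference $w=u_h-u$, use minimality on each side, and add to obtain a Caccioppoli-type estimate. The one substantive difference lies in how the obstacle term $\int_D v^+$ is dispatched. The paper first invokes the already established fact that $v\geq 0$ for the minimizer (Claim~3 of Theorem~\ref{thm:fbp}); this turns $v^+$ into $v$, so that along the line $u+t\psi^2 w$ the functional agrees with the linear functional $I(U)=\int_\Omega|\nabla U|^2+2U$, and the first-variation inequality $0\leq\int\nabla u\cdot\nabla\phi+\phi$ can be paired with its analogue for $u_h$ so that the linear terms cancel. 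You bypass this reduction and use only the convexity of $t\mapsto t^+$: since $v_1,v_2$ are convex combinations of $v,v_h$, the inequality $v_1^++v_2^+\leq v^++v_h^+$ gives the favorable sign for the obstacle contribution directly. Your route is therefore slightly more self-contained---it does not rely on Claim~3 and would go through verbatim for any convex penalty in place of $t^+$---while the paper's computation after linearization is a bit shorter. Both arguments use the constancy of $u$ on $D\times\{0,1\}$ in exactly the way you anticipate, to make the perturbation vanish on the top and bottom faces.
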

\begin{proof}
Let us take
$$
\phi(x)=\psi(x')^2(u(x+eh)-u(x)),
$$
where $\psi\in C_0^\infty (D)$, $0\leq \psi\leq 1$, $\psi(x')=1$ for $x'\in \mathcal{C}$, $\psi(x')=0$ for $\text{dist}(x',\mathcal{C})>\text{dist}(\mathcal{C},D^c)/2$
and $\nabla\psi\leq\frac{4}{\text{dist}(\mathcal{C},D^c)}$.
Observe that the boundary values of the function
$$
u(x)+t\phi(x)=t\psi(x')^2u(x+eh)+(1-t)\psi(x')^2u(x)
$$
are the same as of $u$.
Moreover, for $t\in(0,1)$
$$
\int_0^1 u(x)+t\phi(x)dx_n= t\psi(x')^2v(x+eh)+(1-t)\psi(x')^2v(x)\geq 0
$$
and we can consider the variations of the functional
\begin{equation}
I(u)=\int_\Omega |\nabla u|^2+2u \, dx.
\end{equation}
instead of (\ref{funcobst}). From
$$
J(u+t\phi)-J(u)=I(u+t\phi)-I(u)\geq 0
$$
we obtain
$$
0\leq \int_\Omega \nabla u\nabla \phi +\phi \, dx
$$
or
\begin{equation}\label{ineqint1}
\int_\Omega \nabla u(x)\nabla\left( \psi(x')^2(u(x+eh)-u(x)) \right)+ (\psi(x')^2(u(x+eh)-u(x)))dx\geq 0.
\end{equation}
Repeating the same argument as above for the function $u(x+eh)$ in a slightly shifted domain and using
the function $u(x)$ for constructing a perturbation we can obtain the inequality
\begin{equation}\label{ineqint2}
\int_\Omega \nabla u(x+eh)\nabla\left( \psi(x')^2(u(x)-u(x+eh)) \right)+ (\psi(x')^2(u(x)-u(x+eh)))dx\geq 0.
\end{equation}
adding (\ref{ineqint1}) and (\ref{ineqint2})
\begin{multline}
0\geq \int_\Omega \nabla \left( u(x+eh)-u(x) \right)\nabla\left( \psi(x')^2(u(x+eh)-u(x)) \right)dx\\
=\int_\Omega \psi(x')^2|\nabla \left( u(x+eh)-u(x) \right)|^2 dx+ \\
\int_\Omega\left(u(x+eh)-u(x) \right) 2\psi(x')\nabla\psi \nabla \left( u(x+eh)-u(x) \right)dx
\end{multline}
we arrive at
\begin{multline}\label{mult1st}
\int_\Omega \psi(x')^2|\nabla \left( u(x+eh)-u(x) \right)|^2 dx\leq 
-\int_\Omega 2\left[\left(u(x+eh)-u(x) \right) \nabla\psi\right]\cdot\left[ \psi(x')\nabla \left( u(x+eh)-u(x) \right)\right]dx.
\end{multline}
Now we use the inequality $2|{\bf x}\cdot{\bf y}|\leq 2|{\bf x}|^2+\frac{1}{2}|{\bf y}|^2$
to derive
\begin{multline}
-2\left[\left(u(x+eh)-u(x) \right) \nabla\psi\right]\cdot\left[ \psi(x')\nabla \left( u(x+eh)-u(x) \right)\right]\leq \\
2|\nabla\psi|^2|u(x+eh)-u(x)|^2+\frac{1}{2}\psi(x')^2|\nabla \left( u(x+eh)-u(x) \right)|^2
\end{multline}
and obtain from (\ref{mult1st})
\begin{equation}
\int_\Omega \psi(x')^2|\nabla \left( u(x+eh)-u(x) \right)|^2 dx\leq \\
4\int_\Omega |\nabla\psi|^2|u(x+eh)-u(x)|^2dx
\end{equation}
Taking $C=\frac{64}{(\text{dist}(\mathcal{C},D^c))^2}$ and dividing by $h^2$ we obtain (\ref{ineqlemma}).
\end{proof}

\begin{lem}\label{thm:2nd_der2}
Let $\Omega'\Subset \Omega$, $\Omega'_\delta=\{x\,:\,\text{dist}(x,\Omega')<\delta\}\subset\Omega$, $w\in L^2(\Omega)$ and
$$
\int_{\Omega_\delta}\left|\frac{w(x+e_jh)-w(x)}{h}  \right|^2dx\leq C
$$
for some constant $C$ and all $|h|<\delta$.

Then the weak derivative $\frac{\partial w}{\partial x_j}$ exists in $\Omega'$
and
$$
\int_{\Omega_\delta}\left|\frac{\partial w}{\partial x_j}  \right|^2 dx\leq C.
$$
\end{lem}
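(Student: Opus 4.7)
The plan is to follow the classical difference quotient argument (as in Gilbarg--Trudinger, Lemma 7.24): extract a weak limit of the difference quotients in $L^2$, identify it with the distributional derivative via an integration-by-parts-style manipulation, and then invoke weak lower semicontinuity of the norm.

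First I would set $D_j^h w(x)=(w(x+e_jh)-w(x))/h$ for $|h|<\delta$ and note that, by hypothesis, $\{D_j^h w\}$ is a bounded family in $L^2(\Omega')$ (in fact on $\Omega'_\delta$, but we only need a set containing $\Omega'$). By the Banach--Alaoglu theorem applied to the reflexive space $L^2(\Omega')$, I can pick a sequence $h_k\to 0$ and some $g\in L^2(\Omega')$ with $D_j^{h_k}w\rightharpoonup g$ weakly in $L^2(\Omega')$, satisfying $\|g\|_{L^2(\Omega')}\leq \liminf_k \|D_j^{h_k}w\|_{L^2(\Omega')}\leq C^{1/2}$ by lower semicontinuity of the norm.

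Next, for any $\phi\in C_c^\infty(\Omega')$ and all $|h|$ smaller than $\operatorname{dist}(\operatorname{supp}\phi,\partial\Omega')$, a simple change of variables gives the discrete integration-by-parts identity
\begin{equation*}
\int_{\Omega'} D_j^h w(x)\,\phi(x)\,dx = -\int_{\Omega'} w(x)\, D_j^{-h}\phi(x)\,dx.
\end{equation*}
The right-hand side converges to $-\int_{\Omega'} w\,\partial_j\phi\,dx$ as $h\to 0$ by dominated convergence (since $D_j^{-h}\phi\to\partial_j\phi$ uniformly and the supports are contained in a fixed compact set of $\Omega'$), while the left-hand side converges along the chosen subsequence to $\int_{\Omega'} g\,\phi\,dx$ by weak convergence. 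Hence $\int_{\Omega'} g\,\phi\,dx=-\int_{\Omega'} w\,\partial_j\phi\,dx$ for every test function $\phi$, so $g$ is the weak partial derivative $\partial w/\partial x_j$ in $\Omega'$, with the required $L^2$ bound.

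The argument is almost entirely routine; the only point that requires a little care is making sure the shifts in the difference quotients and the test functions stay inside the enlarged set where the hypothesis is assumed, which is precisely why the statement is phrased with a neighborhood $\Omega'_\delta$ of $\Omega'$. Once that bookkeeping is set up, weak compactness plus the discrete integration-by-parts identity do all the work.
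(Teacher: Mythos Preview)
Your proof is correct and is precisely the standard difference-quotient argument; the paper does not give its own proof but simply cites Lemma~7.24 in Gilbarg--Trudinger, which is exactly the argument you have written out.
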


\begin{proof}
See Lemma 7.24 in \cite{GT}.
\end{proof}

\begin{lem}\label{thm:2nd_der3}
Assume $\Omega'\Subset \Omega$ and $u\in W^{1,2}(\Omega)$. Then there exists
 a constant $C>0$ depending on dimension only such that
$$
\int_{\Omega'} \left| \frac{ u(x+e_jh)-u(x)}{h}  \right|^2dx\leq C
\int_{\Omega}\left|\frac{\partial u}{\partial x_j}  \right|^2 dx
$$
for all $|h|<\text{dist}(\Omega',\Omega^c)$.
\end{lem}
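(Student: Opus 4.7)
The plan is the classical fundamental-theorem-of-calculus plus density argument, which in fact yields the sharp constant $C=1$ (so the dimensional dependence is trivial). This is essentially Lemma 7.23 of \cite{GT}.

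First I would reduce to smooth $u$. Since $C^\infty(\Omega) \cap W^{1,2}(\Omega)$ is dense in $W^{1,2}(\Omega)$, pick a sequence $u_k \to u$ in $W^{1,2}(\Omega)$. Then $\partial u_k/\partial x_j \to \partial u / \partial x_j$ in $L^2(\Omega)$, and because the hypothesis $|h| < \text{dist}(\Omega', \Omega^c)$ gives $\Omega' + h e_j \subset \Omega$, the translates $u_k(\cdot + h e_j) \to u(\cdot + h e_j)$ in $L^2(\Omega')$ as well. Both sides of the desired inequality therefore pass to the limit in $k$, so it suffices to prove the estimate for smooth $u$.

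For smooth $u$ the fundamental theorem of calculus gives
$$u(x + h e_j) - u(x) = \int_0^h \frac{\partial u}{\partial x_j}(x + t e_j)\, dt,$$
and after a sign change in the variable of integration Cauchy--Schwarz produces
$$|u(x + h e_j) - u(x)|^2 \leq |h| \int_0^{|h|} \left|\frac{\partial u}{\partial x_j}\bigl(x + \sgn(h)\, s\, e_j\bigr)\right|^2 ds.$$
Dividing by $h^2$, integrating over $x \in \Omega'$ and swapping the order of integration by Fubini, I would then translate $y = x + \sgn(h)\, s\, e_j$ in the inner integral. For $x \in \Omega'$ and $s \in [0,|h|]$ the point $y$ lies in $\Omega' + \sgn(h)\, s\, e_j \subset \Omega$, so
$$\int_{\Omega'} \left|\frac{u(x+h e_j) - u(x)}{h}\right|^2 dx \leq \frac{1}{|h|} \int_0^{|h|} \int_\Omega \left|\frac{\partial u}{\partial x_j}(y)\right|^2 dy\, ds = \int_\Omega \left|\frac{\partial u}{\partial x_j}\right|^2 dx,$$
which is the claimed inequality with $C=1$.

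There is essentially no obstacle here. The only bookkeeping to watch is in the density step, where one must justify that the translation operator is continuous $L^2(\Omega) \to L^2(\Omega')$ under the assumption $|h| < \text{dist}(\Omega', \Omega^c)$; this follows immediately from $\Omega' + h e_j \subset \Omega$ together with translation invariance of the Lebesgue measure.
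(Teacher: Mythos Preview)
Your argument is correct and is precisely the standard proof of Lemma~7.23 in \cite{GT}, which is all the paper invokes here; the constant $C=1$ you obtain is indeed the sharp one, and the density and Fubini steps are handled cleanly.
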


\begin{proof}
See Lemma 7.23 in \cite{GT}.
\end{proof}

\begin{cor}\label{cor:w22}
$$
u\in W^{2,2}(D'\times(0,1)),\,\,\,\text{for any } D'\Subset D.
$$
\end{cor}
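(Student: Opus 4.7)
The plan is to combine the three preceding lemmas to produce weak tangential second derivatives in $L^2_{\mathrm{loc}}$, and then recover the missing $\partial_{x_n}^2 u$ from the PDE \eqref{main2}.

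Fix $D'\Subset D$ and choose a compact set $\mathcal{C}$ with $D'\subset\mathrm{int}(\mathcal{C})\Subset D$. Pick any unit vector $e\perp e_n$. For $|h|<\mathrm{dist}(\mathcal{C},D^c)/2$, Lemma \ref{thm:2nd_der1} gives
\begin{equation*}
\int_{\mathcal{C}\times (0,1)}\Bigl|\tfrac{\nabla(u(x+eh)-u(x))}{h}\Bigr|^2\,dx\;\le\;C\int_\Omega\Bigl|\tfrac{u(x+eh)-u(x)}{h}\Bigr|^2\,dx,
\end{equation*}
and Lemma \ref{thm:2nd_der3} bounds the right hand side by $C\,\|\partial_e u\|_{L^2(\Omega)}^2\le C\,\|u\|_{W^{1,2}(\Omega)}^2$. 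In particular, the difference quotients $h^{-1}(\partial_j u(x+eh)-\partial_j u(x))$ are uniformly bounded in $L^2(D'\times(0,1))$ for every $j=1,\dots,n$. Applying Lemma \ref{thm:2nd_der2} to each partial derivative $w=\partial_j u$ with the direction $e$ yields that the weak derivative $\partial_e\partial_j u$ exists in $D'\times(0,1)$ and belongs to $L^2$. Since $e\perp e_n$ was arbitrary, this produces all mixed second derivatives $\partial_{x_i}\partial_{x_j}u\in L^2(D'\times(0,1))$ as soon as at least one of the indices $i,j$ is strictly less than $n$.

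The only remaining second derivative is $\partial_{x_n}^2 u$. By Theorem \ref{thm:fbp} the minimizer satisfies \eqref{main2}, and the right hand side of that equation is bounded (it is the sum of a characteristic function and a product of a characteristic function with the bounded normal derivatives of $u$ on the top and bottom faces, which are bounded by Lemma \ref{lem:lipschitzB}). Hence $\Delta u\in L^\infty(\Omega)\subset L^2_{\mathrm{loc}}(\Omega)$. In the sense of distributions,
\begin{equation*}
\partial_{x_n}^2 u \;=\; \Delta u \;-\; \sum_{i=1}^{n-1}\partial_{x_i}^2 u,
\end{equation*}
and we have just shown that each term on the right is an $L^2$ function on $D'\times(0,1)$. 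Therefore $\partial_{x_n}^2 u$ is represented by an $L^2$ function on $D'\times(0,1)$, and combined with the mixed derivatives above this gives $u\in W^{2,2}(D'\times(0,1))$.

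The main subtlety is that Lemma \ref{thm:2nd_der1} only provides difference-quotient estimates in tangential directions $e\perp e_n$, because the key competitor construction $u+t\psi^2(u(\cdot+eh)-u)$ only preserves the boundary values (and the nonnegativity of $v$) for such horizontal shifts; the cylindrical geometry prevents a direct vertical estimate. This is precisely why the last piece $\partial_{x_n}^2 u$ must be obtained indirectly from the Euler--Lagrange equation rather than from a difference-quotient argument in the $e_n$ direction.
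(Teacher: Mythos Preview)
Your argument is correct and follows essentially the same route as the paper: tangential second derivatives via Lemmas~\ref{thm:2nd_der1}--\ref{thm:2nd_der3}, and then $\partial_{x_n}^2 u$ recovered from the boundedness of $\Delta u$ (the paper cites \eqref{eq:bddLapl} directly rather than \eqref{main2}, but this is the same information).

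There is one small point you skate over. Lemma~\ref{thm:2nd_der2} as stated requires $\Omega'\Subset\Omega$, which fails for $\Omega'=D'\times(0,1)$ inside $\Omega=D\times(0,1)$; so strictly speaking you only obtain $\partial_{x_i}\partial_{x_j}u\in L^2(D'\times(\delta,1-\delta))$ for $\delta>0$. The paper closes this gap by observing that the constant boundary data on $D\times\{0\}$ and $D\times\{1\}$ allow an odd reflection of $u$ to $D\times(-1,2)$ (as in Lemma~\ref{lem:lipschitzB}), after which $D'\times(0,1)$ is genuinely interior and one may take $\delta=0$. Your argument is easily repaired in the same way (or, equivalently, by noting that the proof of Lemma~\ref{thm:2nd_der2} only needs the shifted points $x+eh$ to remain in the domain, which is automatic here since $e\perp e_n$).
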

\begin{proof}
The existence $\frac{\partial^2 u}{\partial x_i\partial x_j}$ in $L^2(D'\times(\delta,1-\delta))$,
where $1\leq i \leq n-1$ and $1\leq j\leq n$, and integral bounds
follow from Lemmas \ref{thm:2nd_der1}-\ref{thm:2nd_der3}.
The existence and integral bounds for $\frac{\partial^2 u}{\partial x_n^2}$
follow from (\ref{eq:bddLapl}).

Now let us observe that because of constant boundary data on $D\times \{0\}$
and $D\times \{1\}$ we can extend the function to $D\times (-1, 2)$ similarly as
we have done it in Lemma \ref{lem:lipschitzB}. This is why we can let $\delta=0$.
\end{proof}



\subsection{The comparison principle}\label{sec:compprinc}

One of the interesting features of the functional $J$ is that the comparison principle fails to hold for the
minimizers $u$.

\begin{rem}\label{rem:compfail}
The comparison principle does {\bf not} hold for the functions $u_1$ and $u_2$ in Theorem \ref{compV}. Particularly, in the set
$\{v_2=0\}\subset \{v_1 = 0 \}$, where
\begin{equation}\label{compinteq}
\int_0^1 u_1(x',x_n)dx_n=
\int_0^1 u_2(x',x_n)dx_n=0
\end{equation}
but $u_1\equiv  \!\!\!\!\! \backslash \,\,\, u_2$.
\end{rem}
\begin{proof}
Assume the comparison principle does hold and $u_1\leq u_2$. Then from (\ref{compinteq}) it follows that $u_1\equiv u_2$
in $\{ v_2=0 \}\times (0,1)$.
Let us now consider the function $w= u_2 - u_1\geq 0$. By (\ref{main2})
\begin{equation}
\Delta w = \begin{cases} 0 & \mbox{in } (\{v_1>0\}\cup\{v_2=0\})\times(0,1),
 \\
 1-2\partial_\nu u_1  & \mbox{in } (\{v_1=0\}\setminus\{v_2>0\})\times(0,1),  \end{cases}
\end{equation}
and by (\ref{eq:bddLapl}) $\Delta w\geq 0$. Since the function $w$ is positive at the boundary and vanishes in the set where $u_1=u_2$ is is not constant and thus, by Hopf lemma, $\partial_n w > 0$ in $\{v_2=0\}\times\{0\}$. This contradicts the fact of $u_1\equiv u_2$ on $\{v_2=0\}\times (0,1)$.
\end{proof}

We would like to state the following open problem.

Here we prove that it holds for the functions $v$ with constant
boundary data.

\begin{conj}\label{compV}
Let $u_1$ and $u_2$ minimize (\ref{funcobst}) among functions with constant boundary data $\alpha_1$ and $\alpha_2$
respectively, and $0<\alpha_1<\alpha _2$. Then
$$
v_1(x')\leq v_2(x')
$$
for $x'\in D$.
\end{conj}



\subsection{Remarks on free boundary regularity}\label{sec:fbpdisc}
Let $u$ and $v$ be like in Theorem \ref{thm:fbp}. From Lemma \ref{lem:n-1dim} and (\ref{main2}) it follows that the function $v$ is the solution of the following obstacle problem
\begin{equation}\label{eqblanc}
\Delta v=\chi_{\{v>0\}} h(x'),
\end{equation}
where
$$
h(x')=1-\partial_\nu u (x',0) - \partial_\nu u (x',1)\in C^\alpha(D).
$$

In the points of the free boundary $ x'\in\partial \{v>0\} \cap D$, where $h(x')>0$ we can apply the Theorem 7.2 in \cite{Bl} and obtain that

\vspace{2mm}

\noindent either
\vspace{1mm}

$\bullet$ $x'$ is a regular point and the free boundary is $C^{1,\alpha}$ smooth,

\vspace{1mm}

\noindent or
\vspace{1mm}

$\bullet$ $x'$ is a singular point, i.e. $\lim_{r\to 0}\frac{|\{v=0\}\cap B_r(x')|}{|B_r(x')|}=0$, and the free

boundary in the ball $B_r(x')$ has a minimum diameter less than $\sigma(r)$,

for some given
modulus of continuity $\sigma$.

\vspace{4mm}

Observe that in general it is possible to have singular
points of the free boundary where $h(x')=0$ and the free boundary is not flat. One can take for example
the function $u(x_1,x_2)=\chi_{\{x_1x_2>0\}}x_1^2x_2^2$ and obtain a cross-shaped free boundary, with its minimal diameter scaling of order $r$ in $B_r(0)$.

Whether this kind of non-flat singularities can be excluded for the minimizers of (\ref{funcobst})
is a subject of ongoing research.



\bibliographystyle{plain}
\bibliography{rearr}


\end{document}